\documentclass[leqno,12pt]{amsart} 
\setlength{\textheight}{23cm}
\setlength{\textwidth}{16cm}
\setlength{\oddsidemargin}{0cm}
\setlength{\evensidemargin}{0cm}
\setlength{\topmargin}{0cm}
\usepackage{amsmath, amsthm, amssymb}
\numberwithin{equation}{section}
%
%
%
\theoremstyle{plain} 
\newtheorem{theorem}{\indent\sc Theorem}[section]
\newtheorem{lemma}[theorem]{\indent\sc Lemma}
\newtheorem{corollary}[theorem]{\indent\sc Corollary}
\newtheorem{proposition}[theorem]{\indent\sc Proposition}

\newtheorem{fact}[theorem]{\indent\sc Fact}
\theoremstyle{definition} 
\newtheorem{definition}[theorem]{\indent\sc Definition}

\newtheorem{example}[theorem]{\indent\sc Example}

%

%

\def\C{{\mathbf{C}}}
\def\R{{\mathbf{R}}}
\def\H{{\mathbf{H}}}
\def\N{{\mathbf{N}}}
\def\L{{\mathbf{L}}}
\def\Pi{{\mathbf{P}}}
\def\Si{{\mathbf{S}}}
\def\E{{\mathcal{E}}}

\def\tr#1{\mathord{\mathopen{{\vphantom{#1}}^t}#1}} 
\def\rank{\mathop{\mathrm{rank}}\nolimits}

\newcommand{\trace}{\mathop{\mathrm{trace}}\nolimits}

\begin{document}

\title[Flat fronts in hyperbolic three-space]{A ramification theorem for the ratio of canonical forms of flat surfaces in hyperbolic three-space} 

\author[Y. Kawakami]{Yu Kawakami} 



\renewcommand{\thefootnote}{\fnsymbol{footnote}}
\footnote[0]{2010\textit{ Mathematics Subject Classification}.
Primary 30D35, 53A35; Secondary 53C42.}
\date{5 January, 2012}
\keywords{ 
flat front, weakly complete, ramification theorem, the Ahlfors islands theorem, Bernstein type theorem.
}
\thanks{ 
The author is partially supported by the Grant-in-Aid for Young Scientists (B) No.~24740044, 
Japan Society for the Promotion of Science. 
}
\address{
Graduate School of Science and Engineering, \endgraf
Yamaguchi university, \endgraf
Yamaguchi, 753-8512, Japan
}
\email{y-kwkami@yamaguchi-u.ac.jp}




\maketitle

\begin{abstract}
We provide an effective ramification theorem for the ratio of canonical forms of a weakly complete flat front 
in the hyperbolic three-space. Moreover we give the two applications of this theorem, 
the first one is to show an analogue of the Ahlfors islands theorem for it 
and the second one is to give a simple proof of the classification of complete nonsingular flat surfaces 
in the hyperbolic three-space. 
\end{abstract}

\section*{Introduction} 

It is well-known that any complete nonsingular flat surface in the hyperbolic 3-space ${\H}^{3}$ must be a horosphere or a hyperbolic cylinder, 
that is, a surface equidistance from a geodesic (\cite{Sa}, \cite{VV}). 
However if we consider flat fronts (namely, projections of Legendrian immersions) and define the notion of weakly completeness, 
there exist many examples and interesting global properties (for example, see \cite{KRUY1}, \cite{KRUY2} and \cite{MUY}). 

The ratio $\rho$ of canonical forms plays important roles in investigating the global properties of weakly complete flat fronts in ${\H}^{3}$. 
Indeed, Kokubu, Rossman, Saji, Umehara and Yamada \cite{KRSUY} showed that a point $p$ 
is a singular point of a flat front in ${\H}^{3}$ if and only if $|\rho (p)|=1$. Moreover the author and Nakajo \cite{KN} obtained the best
possible upper bound for the number of exceptional values of $\rho$ of a weakly complete flat front in ${\H}^{3}$. 

The purpose of the present paper is to study the value-distribution-theoretic properties of the ratio of canonical forms of weakly complete 
flat fronts in ${\H}^{3}$. The paper is organized as follows: In Section 1, we recall some definitions and fundamental properties 
of flat fronts in ${\H}^{3}$, which are used throughout this paper. In Section 2, we provide a ramification theorem for the ratio of canonical forms 
of a weakly complete flat front in ${\H}^{3}$ (Theorem \ref{main-thm}). The theorem is effective in the sense that it is sharp 
(see Corollary \ref{cor4-2-1} and the comment below) and has some applications. We note that it corresponds to the defect relation 
in Nevanlinna theory (\cite{Ko}, \cite{Ne}, \cite{NO} and \cite{Ru}). In Section 3, we give the two applications 
of this theorem. The first one is to show an analogue of a special case of the Ahlfors Islands Theorem \cite[Theorem B.2]{Ber} for the ratio of canonical 
forms of a weakly complete flat front in ${\H}^{3}$ (Corollary \ref{cor-4-1-2}). We remark that Klotz and Sario \cite{KS} investigated 
the number of islands for the Gauss map of minimal surfaces in the Euclidean 3-space ${\R}^{3}$. 
The second one is to give a simple proof of the classification of complete nonsingular flat surfaces in ${\H}^{3}$ 
(Corollary \ref{cor4-2-2}). 

Finally, the author would like to particular thank to Masatoshi Kokubu, Junjiro Noguchi, Yusuke Okuyama, Wayne Rossman, Masaaki Umehara, 
Kotaro Yamada and the referee for their useful advice and comments. 



\section{Preliminaries}
We briefly summarize here definitions and basic facts on flat fronts in ${\H}^{3}$ which we shall need. For more details, 
we refer the reader to \cite{GMM}, \cite{KaH}, \cite{KRSUY}, \cite{KRUY1}, \cite{KRUY2}, \cite{KUY1}, \cite{KUY2} and \cite{SUY}. 

Let ${\L}^{4}$ be the Lorentz-Minkowski 4-space with inner product of signature $(-,+,+,+)$. 
Then the hyperbolic 3-space is given by
\begin{equation}\label{hyperbolic-space}
{\H}^{3}=\{(x_{0}, x_{1}, x_{2}, x_{3})\in {\L}^{4} \, | \, -(x_{0})^{2}+(x_{1})^{2}+(x_{2})^{2}+(x_{3})^{2}=-1, x_{0}>0 \}
\end{equation}
with the induced metric from ${\L}^{4}$, which is a simply connected Riemannian 3-manifold with constant 
sectional curvature $-1$. 
Identifying ${\L}^{4}$ with the set of $2\times 2$ Hermitian matrices 
Herm($2$)$=\{X^{\ast}=X\}$ $(X^{\ast}:=\tr{\overline{X}}\,)$ by
\begin{equation}\label{Hermite}
(x_{0}, x_{1}, x_{2}, x_{3}) \longleftrightarrow \left(
\begin{array}{cc}
x_{0}+x_{3} & x_{1}+ix_{2} \\
x_{1}-ix_{2} & x_{0}-x_{3}
\end{array}
\right)
\end{equation}
where $i=\sqrt{-1}$, we can write
\begin{eqnarray}\label{hyperbolicspace}
\H^{3}&=& \{X\in \text{Herm(2)}\,;\, \det{X}=1, \trace{X}>0\} \\
      &=& \{aa^{\ast}\,;\, a\in SL(2,\C)\} \nonumber
\end{eqnarray}
with the metric 
\[
\langle X, Y \rangle = -\frac{1}{2}\trace{(X\widetilde{Y})}, \quad  \langle X, X \rangle =-\det(X)\, ,
\]
where $\widetilde{Y}$ is the cofactor matrix of $Y$. The complex Lie group $ PSL(2,\C):= SL(2,\C)/\{\pm \text{id} \}$ 
acts isometrically on 
$\H^{3}$ by 
\begin{equation}\label{action}
\H^{3} \ni X \longmapsto aXa^{\ast}\, , 
\end{equation}
where $a\in PSL(2,\C)$. 

Let $\Sigma$ be an oriented 2-manifold. A smooth map $f\colon \Sigma\to {\H}^{3}$ is called a {\it front} 
if there exists a Legendrian immersion 
\[
L_{f}\colon \Sigma \to T_{1}^{\ast}{\H}^{3}
\]
into the unit cotangent bundle of ${\H}^{3}$ whose projection is $f$. 
Identifying $T_{1}^{\ast}{\H}^{3}$ with the unit tangent bundle $T_{1}{\H}^{3}$, 
we can write $L_{f}=(f, n)$, where $n (p)$ is a unit vector in $T_{f(p)}{\H}^{3}$ such that 
$\langle df(p), n (p) \rangle= 0$ for each $p\in M$. We call $n$ a {\it unit normal vector field} of the front $f$. 
A point $p\in\Sigma$ where $\rank{(df)}_{p}<2$ is called a {\it singularity} or {\it singular point}. 
A point which is not singular is called {\it regular point}, where the first fundamental form is positive definite.

The {\it parallel front} $f_{t}$ of a front $f$ at distance $t$ is given by $f_{t}(p)=\text{Exp}_{f(p)}(tn (p))$, 
where ``$\text{Exp}$'' denotes the exponential map of ${\H}^{3}$. 
In the model for ${\H}^{3}$ as in \eqref{hyperbolic-space}, we can write 
\begin{equation}\label{parallel}
f_{t}=(\cosh{t})f+(\sinh{t})n, \quad {n}_{t}=(\cosh{t})n +(\sinh{t})f\,,
\end{equation}
where ${n}_{t}$ is the unit normal vector field of $f_{t}$. 

Based on the fact that any parallel surface of a flat surface is also flat at regular points, 
we define flat fronts as follows: A front $f\colon \Sigma\to {\H}^{3}$ is said to be {\it flat} 
if, for each $p\in M$, there exists a real number $t\in \R$ such that the parallel front $f_{t}$ is a 
flat immersion at $p$. By definition, $\{f_{t}\}$ forms a family of flat fronts. 
We note that an equivalent definition of flat fronts is that the Gaussian curvature 
of $f$ vanishes at all regular points. However, there exists a case where this definition is not suitable. 
For details, see \cite[Remark 2.2]{KUY2}. 

We assume that $f$ is flat. Then there exists a (unique) complex structure on $\Sigma$ and 
a holomorphic Legendrian immersion 
\begin{equation}\label{Legen-lift}
{\E}_{f}\colon \widetilde{\Sigma}\to SL(2,\C)
\end{equation}
such that $f$ and $L_{f}$ are projections of ${\E}_{f}$, where $\widetilde{\Sigma}$ is 
the universal covering surface of $\Sigma$. 
Here ${\E}_{f}$ being a holomorphic Legendrian map means that ${\E}^{-1}_{f}d{\E}_{f}$ is off-diagonal (see \cite{GMM}, \cite{KUY1}, 
\cite{KUY2}). 
We call ${\E}_{f}$ the {\it holomorphic Legendrian lift} of $f$. 
The map $f$ and its unit normal vector field $n$ are 
\begin{equation}\label{Legen-map-vec}
f={\E}_{f}{\E}^{\ast}_{f}, \quad n = {\E}_{f}e_{3}{\E}^{\ast}_{f}, \quad e_{3}=\left(
\begin{array}{cc}
1 & 0 \\
0 & -1
\end{array}
\right)\,.
\end{equation}

If we set 
\begin{equation}\label{Legen-form}
{\E}^{-1}_{f}d{\E}_{f}=\left(
\begin{array}{cc}
0      & \theta \\
\omega & 0
\end{array}
\right)\, ,
\end{equation}
the first and second fundamental forms $ds^{2}=\langle df, df \rangle$ and 
$dh^{2}=-\langle df, dn \rangle$ are given by
\begin{eqnarray}\label{Legen-form2}
ds^{2}&=&|\omega+\bar{\theta}|^{2}=Q+\bar{Q}+(|\omega|^{2}+|\theta|^{2}), \quad Q=\omega\theta \\
dh^{2}&=&|\theta|^{2}-|\omega|^{2} \nonumber 
\end{eqnarray}
for holomorphic 1-forms $\omega$ and $\theta$ defined on $\widetilde{\Sigma}$, with $|\omega|^{2}$ and $|\theta|^{2}$ 
well-defined on $\Sigma$ itself. 
We call $\omega$ and $\theta$ the {\it canonical forms} of $f$. The holomorphic 2-differential $Q$ appearing 
in the $(2,0)$-part of $ds^{2}$ is defined on $\Sigma$, and is called the {\it Hopf differential} of $f$. 
By definition, the umbilic points of $f$ coincide with the zeros of $Q$. Defining a meromorphic function on $\widetilde{\Sigma}$ 
by the ratio of canonical forms 
\begin{equation}\label{sign-rho}
\rho=\dfrac{\theta}{\omega}\,,
\end{equation}
then $|\rho|\colon \Sigma\to [0, +\infty]$ is well-defined on $\Sigma$, and $p\in \Sigma$ is a singular point 
if and only if $|\rho(p)|=1$ (\cite{KRSUY}). 

Note that the $(1, 1)$-part of the first fundamental form 
\begin{equation}\label{eq-Sasakian}
ds^{2}_{1,1}=|\omega|^{2}+|\theta|^{2}=(1+|\rho|^{2})|\omega|^{2}
\end{equation}
is positive definite on $\Sigma$ because it is the pull-back of the canonical Hermitian metric of $SL(2, \C)$. 
Moreover $2ds^{2}_{1,1}$ coincides with the pull-back of the Sasakian metric on $T^{\ast}_{1}{\H}^{3}$ 
by the Legendrian lift $L_{f}$ of $f$ (which is the sum of the first and third fundamental forms in this case, 
see \cite[Section 2]{KUY2} for details). The complex structure on $\Sigma$ is compatible with the conformal 
metric $ds^{2}_{1,1}$. Note that any flat front is orientable (\cite[Theorem B]{KRUY1}). 
In the present paper, for each flat front $f\colon \Sigma\to {\H}^{3}$, 
we always regard $\Sigma$ as a Riemann surface with this complex structure. A flat front $f\colon \Sigma \to {\H}^{3}$ is said to be {\it weakly complete} if 
the metric $ds^{2}_{1,1}$ as in (\ref{eq-Sasakian}) is complete. We note that 
the universal cover of a weakly complete flat front is also weakly complete. 

Finally, we give examples which play important roles in the following sections. 

\begin{example}[{\cite[Example 4.1]{KUY2}}, flat fronts of revolution] Let $\overline{\Sigma}=\C\cup \{\infty\}$ and set 
$$
\omega =-\dfrac{1}{c^{2}}z^{-2/(1-\alpha)}dz, \quad \theta = \dfrac{c^{2}\alpha}{(1-\alpha)^{2}}z^{2\alpha /(1-\alpha)} dz, 
$$
for some constants $\alpha\in \R\backslash \{1\}$ and $c\in \R$. We define $\Sigma$ by $\Sigma =\overline{\Sigma}\backslash \{0\}$ for 
the case where $\alpha =0$ and $\Sigma =\overline{\Sigma}\backslash \{0, \infty\}$ for the case where $\alpha\not = 0$, respectively.  
Then we can construct a flat front $f\colon \Sigma\to {\H}^{3}$ whose canonical forms are $\omega$ and $\theta$. 
Indeed, these data give a Legendrian immersion 
$$
{\E}=\left(
\begin{array}{cc}
\dfrac{z^{-{\alpha}/(1-{\alpha})}}{c} & \dfrac{c{\alpha}z^{1/(1-\alpha)}}{1-\alpha} \smallskip \\
\dfrac{z^{-1/(1-{\alpha})}}{c} & \dfrac{cz^{{\alpha}/(1-\alpha)}}{1-\alpha}
\end{array}
\right) , 
$$
and the corresponding flat front $f={\E}{\E}^{\ast}$ is well-defined on $\Sigma$. 
Moreover $f$ is weakly complete because, for each end $p\in \overline{\Sigma}\backslash \Sigma$ of $f$, it holds that 
$$
\text{ord}_{p}ds^{2}_{1,1}=\min\{\text{ord}_{p}|\omega|^{2}, \text{ord}_{p}|\theta|^{2} \}\leq -1. 
$$
The ratio of canonical forms of $f$ is given by 
$$
\rho =\dfrac{\theta}{\omega}=-\dfrac{c^{4}\alpha}{1-\alpha}z^{2(1+\alpha)/(1-\alpha)}. 
$$
Thus if $\alpha=0$ or $-1$, then $\rho$ is constant. We note that $f$ is a horosphere if $\alpha =0$ or a hyperbolic cylinder if $\alpha =-1$. 
\end{example}

Moreover we can obtain weakly complete flat fronts in ${\H}^{3}$ of Voss type (\cite[Theorem 8.3]{Os}, \cite{Vo}). 

\begin{proposition}\label{prop-Voss}
Let $E$ be an arbitrary $q$ points on the Riemann sphere, where $q\leq 3$. Then there exists a weakly complete flat front in ${\H}^{3}$ 
whose image of the ratio of canonical forms omits precisely the set $E$. 
\end{proposition}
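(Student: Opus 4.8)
The plan is to write down explicit canonical forms, imitating the Voss construction for minimal surfaces. Recall the construction behind the Example above (see \cite{GMM}, \cite{KUY1}, \cite{KUY2}): given a Riemann surface $\Sigma_{0}$ and holomorphic $1$-forms $\omega,\theta$ on it that never vanish simultaneously, solving \eqref{Legen-form} with these prescribed off-diagonal entries on the universal cover $\widetilde{\Sigma_{0}}$ yields a holomorphic Legendrian immersion $\E$ with $\det\E\equiv1$, and $f:=\E\E^{\ast}\colon\widetilde{\Sigma_{0}}\to\H^{3}$ is a flat front whose canonical forms are $\omega,\theta$ and whose ratio of canonical forms is $\rho=\theta/\omega$. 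By \eqref{eq-Sasakian}, $f$ is weakly complete precisely when $ds^{2}_{1,1}=(1+|\rho|^{2})|\omega|^{2}$ is complete; and on $\widetilde{\Sigma_{0}}$ this metric is the pull-back, by the (unramified) covering projection, of the same expression on $\Sigma_{0}$, so it suffices to make the latter complete on $\Sigma_{0}$. Likewise $\rho$ on $\widetilde{\Sigma_{0}}$ has the same image as $\rho$ on $\Sigma_{0}$. Hence I only need, for each prescribed $E$ with $q=\#E\le3$: a Riemann surface $\Sigma_{0}$, a meromorphic function $\rho$ on it with $\rho(\Sigma_{0})=\overline{\C}\setminus E$, and a holomorphic $1$-form $\omega$ with $\theta:=\rho\,\omega$ holomorphic, $\omega$ and $\theta$ without common zero, and $(1+|\rho|^{2})|\omega|^{2}$ complete.

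For $q\le 2$ I take $\Sigma_{0}=\C$ and choose $\rho$ of one of the forms $z$, $a+1/z$ with $a\in\C$, or $M\circ e^{z}$ with $M$ a M\"obius transformation; since $z$ and $1/z$ omit a single point of $\overline{\C}$ and $e^{z}$ omits two, one can arrange $\rho(\C)=\overline{\C}\setminus E$ for any prescribed $E$ with $q\le2$. (If one also wants $q=0$, take $\rho=z+1/(z-1)$, which is onto $\overline{\C}$.) Such a $\rho$ has at most a discrete set of poles on $\C$ — empty, one point, or $\{z:e^{z}=w_{0}\}$ — and multiplying $dz$ by a degree-$\le1$ polynomial in $z$, resp.\ in $e^{z}$, vanishing exactly on that polar set gives a holomorphic $\omega$ with $\theta=\rho\,\omega$ holomorphic; the zeros of $\omega$ lie over poles of $\rho$, where $\theta$ is a nonzero holomorphic $1$-form (using the injectivity of $M$), so $\omega$ and $\theta$ have no common zero. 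Finally $(1+|\rho|^{2})|\omega|^{2}$ equals $(|\omega/dz|^{2}+|\theta/dz|^{2})\,|dz|^{2}$; its coefficient is a continuous positive function on $\C$ which tends to $+\infty$ at infinity (in the $e^{z}$ case it is $2\pi i$-periodic in $\mathrm{Im}\,z$, tends to $+\infty$ as $\mathrm{Re}\,z\to+\infty$, and stays bounded below as $\mathrm{Re}\,z\to-\infty$), hence is bounded below by a positive constant; so the metric dominates the complete flat metric $|dz|^{2}$ and is complete on $\C$.

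The case $q=3$ is the substantive one, because $\overline{\C}\setminus E$ is then hyperbolic, cannot be parametrized by $\C$, and the front will be globally defined only on a covering. I take $\Sigma_{0}=\overline{\C}\setminus E$ with $\rho=\mathrm{id}_{\Sigma_{0}}$ (so $\rho(\Sigma_{0})=\overline{\C}\setminus E$), and for $\omega$ the meromorphic $1$-form on $\overline{\C}$ with simple poles exactly at the finite points of $E$, i.e.\ $\omega=\dfrac{dz}{\prod_{e\in E\cap\C}(z-e)}$, so that $\theta=z\,\omega$ is again holomorphic on $\Sigma_{0}$: its only possible pole outside $E$ is at $\infty$, and when $\infty\notin E$ the form $\omega$ has a simple zero at $\infty$ whereas $\theta$ does not, so $\omega,\theta$ still have no common zero and $f$ is a front (with a regular point there, since $|\rho|=\infty\neq1$). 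The metric is $ds^{2}_{1,1}=\dfrac{(1+|z|^{2})\,|dz|^{2}}{\prod_{e\in E\cap\C}|z-e|^{2}}$; near each finite $e\in E$ it behaves like $c_{e}\,|dz|^{2}/|z-e|^{2}$, and when $\infty\in E$ it behaves near $\infty$ like $|dz|^{2}/|z|^{2}$, so in every case the length integral diverges. Thus $ds^{2}_{1,1}$ is complete on $\overline{\C}\setminus E$, and passing to the universal cover $\widetilde{\Sigma_{0}}$ — the unit disc — gives a weakly complete flat front whose ratio of canonical forms is the uniformizing projection $\widetilde{\Sigma_{0}}\to\overline{\C}\setminus E$, and therefore omits precisely $E$.

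The one delicate point is the completeness estimate, and it is genuinely tight. For $q=3$ the metric blows up at the punctures only at the borderline rate $|z-e|^{-2}|dz|^{2}$ — exactly enough, and no faster $\omega$ is available while keeping $\theta=\rho\,\omega$ holomorphic — and one must also check that the unavoidable zero of $\omega$ at $\infty$ does no harm. For $q\le2$ the point is instead that $|\omega/dz|^{2}+|\theta/dz|^{2}$ stays bounded away from $0$ on all of $\C$, which comes down to the points of $E$ being distinct. Conceptually this is the flat-front counterpart of Voss's construction for minimal surfaces in $\R^{3}$ (cf.\ \cite[Theorem~8.3]{Os}, \cite{Vo}), with the weight $(1+|\rho|^{2})$ replacing $(1+|g|^{2})^{2}$; that here one can prescribe at most three points rather than four is reflected in the parameter domain being forced to become hyperbolic already at $q=3$.
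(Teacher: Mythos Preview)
Your proof is correct and follows essentially the same Voss-type construction as the paper: take $\rho$ to be (a lift of) the identity on $\overline{\C}\setminus E$ and choose $\omega$ with simple poles at the finite points of $E$ so that $ds^{2}_{1,1}=(1+|\rho|^{2})|\omega|^{2}$ is complete. Your case split ($q\le 2$ handled directly on $\C$ versus $q=3$ on the universal cover of $\overline{\C}\setminus E$) and your explicit check of the case $\infty\notin E$ are presentational refinements of the paper's uniform argument, which simply normalizes $\alpha_{q}=\infty$ and works on the universal cover of $\overline{\C}\setminus E$ for every $q\le 3$.
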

\begin{proof}
We set $E=\{{\alpha}_{1},\ldots, {\alpha}_{q-1}, {\alpha}_{q}\}\subset \C\cup\{\infty\}$ and $\Sigma:=\C\cup\{\infty\} \backslash E$. Then we may assume without 
loss of generality that ${\alpha}_{q}=\infty$. We take a holomorphic universal covering map $\xi\colon \widetilde{\Sigma}\to \Sigma$, 
where $\widetilde{\Sigma}$ is either the complex plane $\C$ or the unit disk. If we set 
$$
\omega= \dfrac{d\xi}{\prod_{i=1}^{q-1}(\xi -{\alpha}_{i})}, \quad \rho =\xi 
$$
and use the representation (\ref{Legen-lift}), (\ref{Legen-map-vec}) and (\ref{Legen-form}) on $\widetilde{\Sigma}$, we obtain a flat front 
in ${\H}^{3}$ whose the ratio of canonical forms omits precisely the points of $E$. Moreover it is weakly complete. Indeed, a divergent curve 
$\Gamma$ in $\widetilde{\Sigma}$ must tend to one of the point ${\alpha}_{i}$ $(1\leq i\leq q)$, and we have 
$$
\int_{\Gamma}ds_{1, 1}=\int_{\Gamma}\sqrt{1+|\rho|^{2}}|\omega|=\int_{\Gamma}\dfrac{\sqrt{1+|\xi|^{2}}}{\prod_{i=1}^{q-1}|\xi -{\alpha}_{i}|}|d\xi|=\infty\,, 
$$ 
when $q\leq 3$. 
\end{proof}

\section{Main theorem}

In this section, we give an effective ramification theorem for the ratio of canonical forms of a weakly complete flat front 
in ${\H}^{3}$. We first recall the case where the ratio is constant.

\begin{fact}{\cite[Propotion 4.4]{KN}}\label{main-fact1}
Let $f\colon\Sigma\to {\H}^{3}$ be a weakly complete flat front. If the ratio of canonical forms of $f$ defined by (\ref{sign-rho}) is 
constant, then $f$ is congruent to a horosphere or a hyperbolic cylinder. 
\end{fact}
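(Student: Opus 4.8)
The plan is to reduce the statement to an explicit computation with the holomorphic Legendrian lift. Write $\rho\equiv c_{0}\in\C\cup\{\infty\}$ for the constant value of the ratio of canonical forms. The first step is to observe that, since $ds^{2}_{1,1}=(1+|\rho|^{2})|\omega|^{2}$ (or $=(1+|\rho|^{-2})|\theta|^{2}$ when $c_{0}=\infty$) is positive definite by (\ref{eq-Sasakian}), the holomorphic $1$-form $\omega$ (the form $\theta$ when $c_{0}=\infty$) has no zeros, so $ds^{2}_{1,1}$ is a positive constant multiple of $|\omega|^{2}$, hence a flat conformal metric, and by weak completeness it is complete. Passing to the universal cover $\widetilde{\Sigma}$, which is again weakly complete, $(\widetilde{\Sigma},ds^{2}_{1,1})$ is a simply connected complete flat surface and therefore isometric to $(\C,|d\zeta|^{2})$. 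In particular $\widetilde{\Sigma}\cong\C$, and comparing the metrics shows that $\omega$ is a nonzero constant times $d\zeta$; after a linear reparametrization I may assume $\widetilde{\Sigma}=\C$ with global coordinate $z$ and $\omega=dz$, $\theta=c_{0}\,dz$.

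Next I would integrate the structure equation. By (\ref{Legen-form}), ${\E}_{f}^{-1}d{\E}_{f}=A\,dz$ with the \emph{constant} matrix $A=\left(\begin{smallmatrix}0 & c_{0}\\ 1 & 0\end{smallmatrix}\right)$ (and the transposed nilpotent matrix when $c_{0}=\infty$), so ${\E}_{f}(z)={\E}_{f}(0)\exp(zA)$, and replacing $f$ by a congruent front I may take ${\E}_{f}(z)=\exp(zA)$. Since $A^{2}=c_{0}I$, there is the closed form $\exp(zA)=\cosh(\sqrt{c_{0}}\,z)\,I+\frac{\sinh(\sqrt{c_{0}}\,z)}{\sqrt{c_{0}}}\,A$ (to be read as $I+zA$ when $c_{0}=0$), so $f={\E}_{f}{\E}_{f}^{\ast}=\exp(zA)\exp(\bar{z}A^{\ast})$ is completely explicit. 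Moreover, composing a rotation $z\mapsto e^{i\psi}z$ of the domain with conjugation of ${\E}_{f}$ by the diagonal unitary matrix $\mathrm{diag}(e^{-i\psi/2},e^{i\psi/2})$ alters $f$ only by a congruence and replaces $c_{0}$ by $e^{i\psi}c_{0}$, so I may assume $c_{0}\in[0,\infty)\cup\{\infty\}$.

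It then remains to recognise the surface. When $c_{0}=0$ one gets ${\E}_{f}(z)=\left(\begin{smallmatrix}1 & 0\\ z & 1\end{smallmatrix}\right)$ and $f(z)=\left(\begin{smallmatrix}1 & \bar{z}\\ z & |z|^{2}+1\end{smallmatrix}\right)$, whose coordinates in (\ref{hyperbolic-space}) satisfy $x_{0}+x_{3}\equiv 1$; this is the intersection of ${\H}^{3}$ with a null hyperplane, i.e.\ a horosphere (the case $c_{0}=\infty$ reduces to this by swapping $\omega$ and $\theta$, which is a congruence). When $0<c_{0}<\infty$ I would use the identity $f(z+t)=\exp(tA)\,f(z)\,\exp(tA)^{\ast}$ for $t\in\R$, which shows that $f(\C)$ is invariant under the one-parameter group of isometries generated by $A$; diagonalizing $A$ exhibits the geodesic fixed by this group, and a short computation shows that $f(z)$ stays at a fixed distance (depending only on $c_{0}$) from it, so $f$ is a hyperbolic cylinder. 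Equivalently, and more economically, after the normalization above the data $(\omega,\theta)=(dz,c_{0}\,dz)$ on $\C$ agree, up to a reparametrization, with the canonical forms on the universal cover of the flat front of revolution in Example~1.1 for $\alpha=0$ (when $c_{0}\in\{0,\infty\}$) and for $\alpha=-1$ with $c$ chosen so that the constant ratios match (when $0<c_{0}<\infty$); since ${\E}_{f}$, and hence $f$ up to congruence, is determined by $(\omega,\theta)$, this identifies $f$ with the corresponding horosphere or hyperbolic cylinder.

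I expect this last identification to be the main obstacle, because a priori $c_{0}$ can be any complex number; the rotational normalization $c_{0}\in[0,\infty)$, combined with either the ``equidistant from a geodesic'' characterization or the comparison with Example~1.1, is what keeps it tractable. One should also pay attention to the borderline value $|c_{0}|=1$: there the first fundamental form $ds^{2}=|\omega+\bar{\theta}|^{2}$ degenerates at every point and the image of $f$ collapses onto a geodesic, which is consistent with the statement once a geodesic is counted as a hyperbolic cylinder of radius zero (it corresponds to $\alpha=-1$, $|c|^{4}=4$ in Example~1.1). The remaining steps --- the reduction to $\C$, the integration of the first-order system, and the normalization --- are routine.
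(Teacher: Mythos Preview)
The paper does not contain its own proof of this statement: it is recorded as a \emph{Fact} with a citation to \cite[Proposition 4.4]{KN} and then used as a black box. So there is nothing to compare your argument against from the present paper.

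That said, your argument is sound and self-contained. The reduction to $\widetilde{\Sigma}=\C$ with $\omega=dz$ via completeness and flatness of $ds^{2}_{1,1}$ is correct, the integration ${\E}_{f}(z)=\exp(zA)$ with constant $A=\left(\begin{smallmatrix}0&c_{0}\\1&0\end{smallmatrix}\right)$ is the natural step, and the identification with Example~1.1 for $\alpha\in\{0,-1\}$ finishes things cleanly. Two small comments. First, your normalization of $\arg c_{0}$ is stated a bit loosely: what actually works is \emph{right} multiplication of ${\E}_{f}$ by a unitary diagonal matrix $U=\mathrm{diag}(e^{i\phi},e^{-i\phi})$, which leaves $f={\E}_{f}{\E}_{f}^{\ast}$ literally unchanged while sending $(\omega,\theta)\mapsto(e^{-2i\phi}\omega,\,e^{2i\phi}\theta)$, hence $\rho\mapsto e^{4i\phi}\rho$; a subsequent rotation of $z$ restores $\omega=dz$. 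Calling this ``conjugation'' and writing the resulting phase as $e^{i\psi}$ rather than $e^{2i\psi}$ (or $e^{4i\phi}$) is harmless for the conclusion but worth tightening. Second, your handling of $|c_{0}|=1$ is honest: with $c_{0}=1$ one finds $f(z)=\exp(2(\mathrm{Re}\,z)A)$, so the image is a geodesic; whether one calls this a ``hyperbolic cylinder of radius zero'' is a convention, and you are right to flag it rather than pretend it does not occur.
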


The following is the main result of the present paper. 

\begin{theorem}\label{main-thm}
Let $f\colon \Sigma\to {\H}^{3}$ be a weakly complete flat front. Let $q\in \N$, ${\alpha}_{1}, \ldots, {\alpha}_{q}\in \C\cup\{\infty\}$ be distinct 
and $m_{1}, \cdots, m_{q}\in \N\cup \{\infty \}$. Suppose that 
\begin{equation}\label{eq2-1}
\gamma =\displaystyle \sum_{j=1}^{q} \biggl(1-\dfrac{1}{m_{j}} \biggr)> 3. 
\end{equation}
If the ratio of canonical forms $\rho\colon \widetilde{\Sigma}\to \C\cup\{\infty\}$ of $f$ satisfies the property that all ${\alpha}_{j}$-points of $\rho$ 
have multiplicity at least $m_{j}$, then $f$ must be congruent to a horosphere or a hyperbolic cylinder. 
\end{theorem}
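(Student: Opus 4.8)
The plan is to reduce the statement to a Bernstein-type / value-distribution argument for the meromorphic function $\rho$ on the weakly complete surface, exactly in the spirit of the classical ramification theorems for the Gauss map of minimal surfaces (\cite{Os}, \cite{Ru}) and the results of \cite{KN}. By Fact \ref{main-fact1} it suffices to show that the hypothesis \eqref{eq2-1} forces $\rho$ to be constant; once that is known, $f$ is a horosphere or a hyperbolic cylinder. Passing to the universal cover $\widetilde{\Sigma}$ (which is again weakly complete, and on which $\rho$ is genuinely defined), I may assume $\Sigma=\widetilde\Sigma$ is simply connected, hence biholomorphic to $\C$, the unit disk $\D$, or $\C\cup\{\infty\}$; the last case cannot occur for a non-compact complete surface, and if $\Sigma\cong\C$ then a complete flat metric with $\rho$ bounded ramification is easily handled, so the crux is the hyperbolic case $\Sigma\cong\D$.

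The core step is a metric-comparison argument. Suppose $\rho$ is non-constant. From \eqref{eq-Sasakian} the weakly complete metric is $ds^2_{1,1}=(1+|\rho|^2)|\omega|^2$. The idea, following the Fujimoto/Ru technique, is to build from $\rho$ and its ramification data a new conformal pseudometric on $\Sigma$ minus the ramification divisor of the form
\begin{equation*}
d\tau^2 = \left(\frac{(1+|\rho|^2)\,|\rho'|}{\prod_{j=1}^{q}|\rho-\alpha_j|^{1-1/m_j}}\right)^{\!\frac{2}{\,\gamma-2\,}}\,|\omega|^{2}\big/\,|\rho'|^{?}
\end{equation*}
— more precisely, one chooses the exponents so that $d\tau^2$ has strictly negative curvature bounded above by a negative constant and, because each $\alpha_j$-point has multiplicity $\ge m_j$, $d\tau^2$ extends smoothly (with at worst removable behaviour) across the ramification points. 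The condition $\gamma>3$, i.e. $\gamma-2>1$, is precisely what makes the relevant exponent admissible and the curvature computation go through. One then invokes the Ahlfors–Schwarz lemma: a metric of curvature $\le -c<0$ on $\Sigma$ is dominated by the Poincaré metric, which is complete; comparing $d\tau^2$ with the complete metric $ds^2_{1,1}$ and integrating along a divergent path yields a contradiction with weak completeness. (This is the same mechanism by which the bound on the number of exceptional values in \cite{KN} is obtained, now refined to account for multiplicities.)

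Concretely I would carry out the steps in this order: (1) reduce to $\Sigma$ simply connected and dispose of the parabolic and compact cases; (2) assuming $\rho$ non-constant and $\Sigma\cong\D$, write down the auxiliary pseudometric $d\tau^2$ with the exponent $2/(\gamma-2)$ and verify it is well-defined (this is where the multiplicity hypothesis $\ge m_j$ at $\alpha_j$-points is consumed); (3) compute the Gaussian curvature of $d\tau^2$ and check $K_{d\tau^2}\le -c<0$ using $\gamma-2>1$; (4) apply the Ahlfors–Schwarz lemma to bound $d\tau^2$ below by a constant times the complete Poincaré metric, hence show $d\tau^2$ is complete; (5) derive from this that $ds^2_{1,1}$ is dominated by an incomplete metric along some divergent curve, contradicting weak completeness; (6) conclude $\rho$ is constant and apply Fact \ref{main-fact1}.

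The main obstacle is Step (3), the curvature estimate: one must verify that with the chosen exponent the "error terms" coming from the factors $1+|\rho|^2$ and $\prod|\rho-\alpha_j|$ have the right sign and that the singularities introduced by the ramification divisor are genuinely harmless. Getting the admissible range of the exponent to be exactly $\gamma>3$ (rather than some weaker bound) requires a careful, sharp choice — this is what makes the theorem "effective" — and matching it against the sharpness examples (Proposition \ref{prop-Voss} and the flat fronts of revolution) is the delicate point. Everything else is either standard (Ahlfors–Schwarz, the structure of simply connected Riemann surfaces) or already available from the cited work.
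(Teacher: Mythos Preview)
Your overall strategy---reduce to showing $\rho$ is constant via Fact \ref{main-fact1}, work on the simply connected cover, and contradict weak completeness by comparing $ds^2_{1,1}$ with an auxiliary metric built from $\rho$ and its ramification data---is the right one and matches the paper's plan. But steps (4)--(5) are garbled: the Ahlfors--Schwarz lemma bounds a metric of curvature $\le -c$ \emph{above} by the Poincar\'e metric, not below, so it cannot show $d\tau^2$ is complete; and even if $d\tau^2$ were complete, that would not by itself yield an incomplete bound on $ds^2_{1,1}$. The logic has to run the other way: one needs an auxiliary object that a Schwarz-type estimate shows is \emph{small}, and which simultaneously \emph{dominates} $ds^2_{1,1}$ along some divergent curve. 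Your displayed formula for $d\tau^2$ also has an undetermined exponent (the ``?''), and getting those exponents right is exactly where the threshold $\gamma>3$ enters, so this is not a minor detail.

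The paper packages the same circle of ideas differently, following Fujimoto rather than using Ahlfors--Schwarz directly. It constructs a \emph{flat} auxiliary metric $d\sigma^2$ on $\widetilde{\Sigma}'$ (the complement of the zeros of $\rho'$ and the $\alpha_j$-points), with explicit exponents \eqref{eq2-4}--\eqref{eq2-6}, and invokes Lemma \ref{main-lem2} to realize a neighborhood of any point as the isometric image of a Euclidean disk ${\triangle}_{R}$. Fujimoto's estimate (Lemma \ref{main-lem1}, itself a Schwarz-lemma consequence) forces $R<\infty$, and the image of a radial segment is a divergent curve in $\widetilde{\Sigma}'$ of finite $d\sigma$-length. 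A case-by-case order-of-vanishing computation at points of $\widetilde{\Sigma}\setminus\widetilde{\Sigma}'$---this is precisely where the hypothesis ``multiplicity $\ge m_j$'' and the inequality $\gamma>3$ are consumed---shows the curve is in fact divergent in $\widetilde{\Sigma}$. Finally Lemma \ref{main-lem1} is applied a second time to bound ${\Phi}^{\ast}ds_{1,1}$ by $C^{\lambda}(R/(R^2-|z|^2))^{\lambda}|dz|$ with $0<\lambda<1$, which integrates to a finite length and contradicts completeness of $ds^2_{1,1}$. Your negatively-curved-metric route can be made to work, but you would have to repair the direction of the inequality, pin down the exponents, and still carry out essentially the same local analysis at the excluded points; the flat-metric approach in the paper handles all of this in one pass.
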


Before proceeding to the proof of Theorem \ref{main-thm}, we recall two function-theoretical lemmas. 
For two distinct values $\alpha$, $\beta\in \C\cup\{\infty\}$, we set 
$$
|\alpha, \beta|:=\dfrac{|\alpha -\beta|}{\sqrt{1+|\alpha|^{2}}\sqrt{1+|\beta|^{2}}}
$$ 
if $\alpha\not= \infty$ and $\beta\not= \infty$, and $|\alpha, \infty|=|\infty, \alpha|:=1/\sqrt{1+|\alpha|^{2}}$. 
Note that, if we take $v_{1}$, $v_{2}\in {\Si}^{2}$ with $\alpha =\varpi (v_{1})$ and $\beta =\varpi (v_{2})$, 
we have that $|\alpha, \beta|$ is a half of the chordal distance between $v_{1}$ and $v_{2}$, where $\varpi$ denotes 
the stereographic projection of the 2-sphere ${\Si}^{2}$ onto $\C\cup\{\infty\}$.  

\begin{lemma}[{\cite[Corollary 1.4.15]{Fu2}}]\label{main-lem1}
Let $\rho$ be a nonconstant meromorphic function on ${\triangle}_{R}=\{z\in \C; |z|< R\}$ $(0<R\leq \infty)$. 
Let $q\in \N$, ${\alpha}_{1}, \ldots, {\alpha}_{q}\in \C\cup\{\infty\}$ be distinct 
and $m_{1}, \cdots, m_{q}\in \N\cup \{\infty \}$. Suppose that 
$$
\gamma =\displaystyle \sum_{j=1}^{q} \biggl(1-\dfrac{1}{m_{j}} \biggr)> 2. 
$$
If $\rho$ satisfies the property that all ${\alpha}_{j}$-points of $\rho$ have multiplicity at least $m_{j}$, then, for 
arbitrary constants $\eta\geq 0$ and $\delta >0$ with $\gamma -2>\gamma(\eta +\delta)$, it holds that 
\begin{equation}\label{eq2-3}
\dfrac{|{\rho}'|}{1+|\rho|^{2}}\dfrac{1}{({\prod}_{j=1}^{q}|\rho, {\alpha}_{j}|^{1-1/m_{j}})^{1-\eta-\delta}}\leq C\dfrac{R}{R^{2}-|z|^{2}}, 
\end{equation}
where $C$ is some constant depending only on $\gamma$, $\eta$, $\delta$, and $L:={\min}_{i<j}|{\alpha}_{i}, {\alpha}_{j}|$. 
\end{lemma}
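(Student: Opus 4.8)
\textbf{Plan of proof for Theorem \ref{main-thm}.}
The strategy is to argue by contradiction: assume that $\rho$ is nonconstant and derive a contradiction with the weak completeness of $f$. If we succeed, then $\rho$ must be constant, and Fact \ref{main-fact1} immediately yields that $f$ is congruent to a horosphere or a hyperbolic cylinder. Since $\rho$ is a meromorphic function on the universal cover $\widetilde{\Sigma}$, and weak completeness is inherited by the universal cover, I may replace $\Sigma$ by $\widetilde{\Sigma}$ and assume $\widetilde{\Sigma}$ is simply connected. By the uniformization theorem $\widetilde{\Sigma}$ is biholomorphic to $\C\cup\{\infty\}$, $\C$, or the unit disk. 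The sphere is excluded because a nonconstant meromorphic $\rho$ on it cannot have all $\alpha_j$-points of multiplicity at least $m_j$ while satisfying the ramification hypothesis with $\gamma>3$ (the total ramification would violate Riemann--Hurwitz). The plane $\C$ is also excluded by a classical argument: a nonconstant meromorphic function on $\C$ satisfying $\gamma>2$ cannot exist (this is the content of the defect-relation side of Lemma \ref{main-lem1}; one can also invoke the little Picard-type theorem underlying the ramification inequality). Hence I may assume $\widetilde{\Sigma}$ is the unit disk $\triangle=\{|z|<1\}$, so that $R=1$ in the setup of Lemma \ref{main-lem1}.

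\textbf{Constructing a flat singular pseudometric.}
The key is to build from $ds^{2}_{1,1}$ a new conformal metric whose completeness forces $\rho$ to be constant, exploiting the extra ramification margin $\gamma-2$. Since $\gamma>3$, I fix constants $\eta\geq 0$ and $\delta>0$ with $\gamma-2>\gamma(\eta+\delta)$ and, crucially, with the exponent configuration arranged so that $\gamma-2-\gamma(\eta+\delta)$ can be made close to $1$; this is the quantitative room supplied by the hypothesis $\gamma>3$ rather than merely $\gamma>2$. Recalling from \eqref{eq-Sasakian} that $ds^{2}_{1,1}=(1+|\rho|^{2})|\omega|^{2}$, I would define a metric of the form
\begin{equation*}
d\sigma^{2}=\Bigl(1+|\rho|^{2}\Bigr)\Bigl(\prod_{j=1}^{q}|\rho,\alpha_{j}|^{1-1/m_{j}}\Bigr)^{2\lambda}\,|\omega|^{2}
\end{equation*}
for a suitable exponent $\lambda>0$ chosen in terms of $\eta,\delta$. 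The factor $\prod_{j}|\rho,\alpha_{j}|^{1-1/m_{j}}$ vanishes exactly at the $\alpha_j$-points of $\rho$, but the ramification hypothesis (multiplicity at least $m_j$) ensures that the metric $d\sigma^{2}$ extends smoothly (as a possibly degenerate pseudometric) across these points and has no singularities there; this is the standard device from Fujimoto's method.

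\textbf{Deriving the contradiction.}
Because $\rho$ is ramified as assumed and $\gamma>2$, Lemma \ref{main-lem1} applies and gives the derivative bound \eqref{eq2-3}. The plan is to use this bound to show that the flat pseudometric $d\sigma^{2}$ constructed above dominates a complete metric pulled back from $\triangle$: more precisely, I would establish an inequality of the form $d\sigma\geq c\,\dfrac{|dz|}{1-|z|^{2}}$ near the boundary, using \eqref{eq2-3} to control $|\rho'|/(1+|\rho|^2)$ from below in terms of the boundary-blowup factor $R/(R^{2}-|z|^{2})$. Integrating along a divergent curve $\Gamma$ tending to $\partial\triangle$ then shows that the $ds^{2}_{1,1}$-length of $\Gamma$ is \emph{finite}, because the ramification product carries the exponent down enough when $\gamma>3$. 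This contradicts the weak completeness of $f$, namely that $ds^{2}_{1,1}$ is complete. The contradiction forces $\rho$ to be constant, completing the proof via Fact \ref{main-fact1}.

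\textbf{Main obstacle.}
The hardest part is the exponent bookkeeping that converts the $\gamma>2$ estimate of Lemma \ref{main-lem1} into a $\gamma>3$ \emph{completeness} contradiction: one must choose $\eta$, $\delta$, and the power $\lambda$ so that the resulting pseudometric is simultaneously (i) nondegenerate and singularity-free at the $\alpha_j$-points, and (ii) bounded below by the complete Poincar\'e-type metric $|dz|/(1-|z|^{2})$ along divergent curves. The margin $\gamma-3>0$ is exactly what is consumed in achieving both conditions at once, and verifying that the length integral $\int_{\Gamma}ds_{1,1}$ is finite (rather than merely bounded on compact pieces) is the delicate step. I expect this to parallel Fujimoto's argument for the Gauss map of minimal surfaces, with the chordal-distance factors playing the role of the ramification correction.
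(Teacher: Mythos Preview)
Your proposal is not a proof of Lemma~\ref{main-lem1}; it is a sketch of Theorem~\ref{main-thm}. Lemma~\ref{main-lem1} is quoted verbatim from Fujimoto's monograph and is not proved in the paper at all---it is an \emph{input} to the main theorem, not a result the paper establishes. So as a proof of the stated lemma, the proposal is simply off-target.

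If your intent was actually Theorem~\ref{main-thm}, the broad strategy (contradiction, reduction to the disk, Fujimoto-style exponent bookkeeping, a divergent path of finite $ds_{1,1}$-length) agrees with the paper's, but the mechanism you describe has a real gap. Your step ``$d\sigma\geq c\,|dz|/(1-|z|^{2})$'' runs in the wrong direction: Lemma~\ref{main-lem1} is an \emph{upper} bound on $|\rho'|/(1+|\rho|^{2})$, not a lower bound, so it cannot force your pseudometric to dominate the Poincar\'e metric. Moreover, the pseudometric you write, $d\sigma^{2}=(1+|\rho|^{2})\bigl(\prod_{j}|\rho,\alpha_{j}|^{1-1/m_{j}}\bigr)^{2\lambda}|\omega|^{2}$, is not flat, so the uniformization step that drives the argument is missing. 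The paper instead constructs a genuinely flat metric built from $|h_{z}|^{2/(1-\lambda)}$ and $|\rho'_{z}|^{-2\lambda/(1-\lambda)}$, then invokes Lemma~\ref{main-lem2} to pull it back isometrically to a Euclidean disk $\triangle_{R}$; Lemma~\ref{main-lem1} is applied on that auxiliary disk, first to show $R<\infty$, and then to bound $\Phi^{*}ds_{1,1}$ \emph{from above} by $C^{\lambda}\bigl(R/(R^{2}-|z|^{2})\bigr)^{\lambda}|dz|$, which is integrable because $0<\lambda<1$. That is what produces a divergent curve of finite $ds_{1,1}$-length. Your outline omits the flat-metric/local-isometry step entirely, and without it the argument does not close.
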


\begin{lemma}[{\cite[Corollary 1.6.7]{Fu2}}]\label{main-lem2}
Let $d{\sigma}^{2}$ be a conformal flat metric on an open Riemann surface $\Sigma$. 
Then, for each point $p\in \Sigma$, there exists a local diffeomorphism $\Phi$ of a 
disk ${\Delta}_{R}=\{z\in \C\, ;\, |z|<{R}\}$ $(0<{R}\leq +\infty)$ onto an open neighborhood 
of $p$ with $\Phi (0)=p$ such that $\Phi$ is a local isometry, that is, the pull-back ${\Phi}^{\ast}(d{\sigma}^{2})$ 
is equal to the standard Euclidean metric $ds_{Euc}^{2}$ on  ${\Delta}_{R}$ and, for a point $a_{0}$ with $|a_{0}|=1$, 
the $\Phi$-image ${\Gamma}_{a_{0}}$ of the curve $L_{a_{0}}=\{w:=a_{0}s\, ;\, 0<s<R \}$ is divergent in $\Sigma$. 
\end{lemma}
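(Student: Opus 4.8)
The strategy is the standard Fujimoto-type argument for ramification theorems applied to weakly complete flat fronts. Since $\rho$ is a meromorphic function on the universal cover $\widetilde{\Sigma}$, I may assume $f$ is not congruent to a horosphere or hyperbolic cylinder and argue by contradiction. By Fact \ref{main-fact1}, this means $\rho$ is nonconstant. The goal is to construct a flat conformal metric on $\Sigma$ (or $\widetilde{\Sigma}$) whose completeness follows from weak completeness, yet which Lemma \ref{main-lem1} shows must have bounded distance to its ``boundary'', producing the contradiction.

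\textbf{Key steps.} First I would choose constants $\eta\geq 0$ and $\delta>0$ with $\gamma-2>\gamma(\eta+\delta)$; this is possible precisely because $\gamma>2$, and here we have the stronger hypothesis $\gamma>3$, which I will exploit below. Using these, I define on $\widetilde{\Sigma}$ (away from the zeros of $d\rho$ and the ${\alpha}_j$-points) the conformal metric
\begin{equation}\label{eq-metric}
d{\sigma}^{2}=\left(\frac{(1+|\rho|^{2})({\prod}_{j=1}^{q}|\rho, {\alpha}_{j}|^{1-1/m_{j}})^{1-\eta-\delta}}{|{\rho}'|}\right)^{\frac{2}{\gamma-2-\gamma(\eta+\delta)}}|dz|^{2}.
\end{equation}
A direct computation shows $d{\sigma}^{2}$ is flat (its conformal factor is, up to the exponent, the modulus squared of a meromorphic function, so it has zero curvature where defined) and, because of the chosen multiplicity and ramification hypotheses on the ${\alpha}_j$-points, the apparent singularities at these points and at the zeros of ${\rho}'$ are removable, so $d{\sigma}^{2}$ extends to a genuine flat conformal metric on $\widetilde{\Sigma}$ (or near each such point is dominated appropriately). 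Next I apply Lemma \ref{main-lem2} to $d{\sigma}^{2}$: for the center point $p$ there is a local isometry $\Phi$ from a disk ${\Delta}_R$ with ${\Phi}^{\ast}(d{\sigma}^{2})=ds_{Euc}^{2}$ and a divergent $\Phi$-image curve ${\Gamma}_{a_0}$.

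\textbf{Closing the argument.} The heart is to show $R<\infty$ and then derive a contradiction with weak completeness. Pulling $\rho$ back by $\Phi$ gives a meromorphic function on ${\Delta}_R$ satisfying the hypotheses of Lemma \ref{main-lem1}; the inequality \eqref{eq2-3} combined with the isometry relation ${\Phi}^{\ast}(d{\sigma}^{2})=|dw|^{2}$ bounds the length of $\Phi$ against $R$, forcing $R<\infty$. On the other hand, I estimate the $ds^{2}_{1,1}$-length of the divergent curve ${\Gamma}_{a_0}$: using $ds^{2}_{1,1}=(1+|\rho|^{2})|\omega|^{2}$ and $\rho={\theta}/{\omega}$, I express $|\omega|$ in terms of $|{\rho}'|$ and the conformal factor of $d{\sigma}^{2}$, and apply Hölder's inequality to the integral of $ds_{1,1}$ along $\Phi(L_{a_0})$. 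Because $\gamma>3$ (not merely $\gamma>2$) the relevant exponent is such that this length is bounded by a finite multiple of $R$, hence finite. This contradicts the divergence of ${\Gamma}_{a_0}$ in the complete metric $ds^{2}_{1,1}$. The main obstacle is the bookkeeping in this final Hölder estimate: one must verify that the hypothesis $\gamma>3$ is exactly what makes the exponent of $(1+|\rho|^2)$ integrable against $d\sigma$ so that finiteness of the $ds_{1,1}$-length follows from $R<\infty$, and that the removable-singularity analysis at the ${\alpha}_j$-points is compatible with this estimate.
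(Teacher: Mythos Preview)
Your proposal does not address the stated lemma at all. Lemma~\ref{main-lem2} is a technical fact about flat conformal metrics (the existence of a maximal isometric disk with a divergent radial image), and the paper simply cites it from Fujimoto~\cite[Corollary~1.6.7]{Fu2} without proof. What you have written is instead a sketch of the proof of Theorem~\ref{main-thm}, which \emph{uses} Lemma~\ref{main-lem2} as a tool. So as a proof of the displayed statement the proposal is off-target.

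Even read as a sketch of Theorem~\ref{main-thm}, there is a genuine gap. Your auxiliary metric in~\eqref{eq-metric} is built only from $\rho$ and $\rho'$; it contains no factor coming from the canonical form $\omega$. But the weakly complete metric you must contradict is $ds_{1,1}^{2}=(1+|\rho|^{2})|\omega|^{2}$, and $|\omega|$ is \emph{not} determined by $\rho$ alone (writing $\omega=h_{z}\,dz$, the function $h_{z}$ is independent data). Consequently the step ``express $|\omega|$ in terms of $|\rho'|$ and the conformal factor of $d\sigma^{2}$'' cannot be carried out with your metric, and no H\"older argument will bridge this. The paper's proof avoids this by inserting the factor $|h_{z}|^{2/(1-\lambda)}$ into the definition of $d\sigma^{2}$ (see~\eqref{eq2-6}); then the isometry relation $\Phi^{\ast}d\sigma^{2}=|dz|^{2}$ solves for $|h_{z}|$ in terms of $\rho$ (equation~\eqref{eq2-8}), and $\Phi^{\ast}ds_{1,1}$ can be bounded directly by Lemma~\ref{main-lem1}. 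A second point: the paper does \emph{not} claim that $d\sigma^{2}$ extends smoothly across the $\alpha_{j}$-points and the zeros of $\rho'$; rather it works on the punctured surface $\widetilde{\Sigma}'$ and then argues separately, via an order-of-vanishing computation, that the divergent curve $\Gamma_{a_{0}}$ cannot terminate at such a puncture. Your ``removable singularity'' shortcut would need justification, and in fact the metric typically blows up at those points.
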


\begin{proof}[{\it Proof of Theorem \ref{main-thm}}]
This is proved by contradiction. Suppose that $\rho$ is nonconstant. For our purpose, 
we may assume ${\alpha}_{q}=\infty$ after a suitable M\"obius transformation and that $\widetilde{\Sigma}$ 
is biholomorphic to the unit disk because Theorem \ref{main-thm} is obvious in the case where $\widetilde{\Sigma}=\C$ by Nevanlinna theory 
\cite[Section 3 in Chapter X]{Ne}. 
We choose some $\delta$ such that $\gamma -3 > 2{\gamma}^{2}\delta > 2\gamma\delta >0$ and set 
\begin{equation}\label{eq2-4}
\eta :=\dfrac{\gamma -3-2\gamma\delta}{\gamma}, \quad \lambda=\dfrac{1}{1+\gamma\delta}\,. 
\end{equation}
Then if we choose a sufficiently small positive number $\delta$ depending only on $\gamma$, for the constant ${\varepsilon}_{0}=(\gamma -3)/2\gamma$ 
we have 
\begin{equation}\label{eq2-5}
0<\lambda< 1, \quad \dfrac{{\varepsilon}_{0}\lambda}{1-\lambda}\biggl{(}=\dfrac{\gamma -3}{2{\gamma}^{2}\delta}\biggr{)}>1\,.
\end{equation}
Now we define a new metric 
\begin{equation}\label{eq2-6}
d{\sigma}^{2}=\dfrac{1}{|{\rho'}_{z}|^{\lambda}}\biggl{|}\dfrac{\omega}{d\rho}\biggr{|}^{2/(1-\lambda)}\biggl{(}\prod_{j=1}^{q-1}\biggl{(}\dfrac{|\rho -{\alpha}_{j}|}
{\sqrt{1+|{\alpha}_{j}|^{2}}}\biggr{)}^{{\eta}_{j}(1-\eta -\delta)} \biggr{)}^{2\lambda/(1-\lambda)}|dz|^{2}
\end{equation}
on the set $\widetilde{\Sigma}'=\{z\in\widetilde{\Sigma} ;\,{\rho}_{z}'(z)\not=0\: \text{and}\: \rho(z)=a_{j}\; \text{for all}\; j \}$ where 
$\omega =h_{z}dz$, ${\rho}_{z}'=d\rho /dz$ and ${\eta}_{j}=1-1/m_{j}$. Take a point $p\in\widetilde{\Sigma}'$. Since the metric $d{\sigma}^{2}$ is 
flat on $\widetilde{\Sigma}'$, by Lemma \ref{main-lem2}, there exists a local isometry $\Phi$ satisfying $\Phi (0)=p$ from a disk 
${\triangle}_{R}=\{z\in\C ;\,|z|<R\}$ $(0<R\leq +\infty)$ with the standard metric $ds^{2}_{Euc}$ onto an open neighborhood of $p$ in $\widetilde{\Sigma}'$ 
with the metric $d{\sigma}^{2}$ such that, for a point $a_{0}$ with $|a_{0}|=1$, the $\Phi$-image ${\Gamma}_{a_{0}}$ of the curve 
$L_{a_{0}}=\{w:=a_{0}s\, ;\, 0<s<R \}$ is divergent in $\widetilde{\Sigma}'$. For brevity, we denote the function $\rho\circ \Phi$ on ${\triangle}_{R}$ 
by $\rho$ in the followings. By Lemma \ref{main-lem1}, we get 
\begin{equation}\label{eq2-7}
R\leq C\dfrac{1+|\rho (0)|^{2}}{|{\rho}_{z}'(0)|}\prod_{j=1}^{q}|\rho (0), {\alpha}_{j}|^{{\eta}_{j}(1-\eta -\delta)} < +\infty\,.
\end{equation}
Hence 
$$
L_{d\sigma}({\Gamma}_{a_{0}})=\int_{{\Gamma}_{a_{0}}}d\sigma =R <+\infty, 
$$
where $L_{d\sigma}({\Gamma}_{a_{0}})$ denotes the length of ${\Gamma}_{a_{0}}$ with respect to the metric $d{\sigma}^{2}$. 

Now we prove that ${\Gamma}_{a_{0}}$ is divergent in $\widetilde{\Sigma}$. If not, 
then ${\Gamma}_{a_{0}}$ must tend to a point $p_{0}\in \widetilde{\Sigma}\backslash \widetilde{\Sigma}'$ where ${\rho}_{z}'(p_{0})=0$ or 
$\rho (p_{0})={\alpha}_{j}$ for some $j$ because ${\Gamma}_{a_{0}}$ is divergent in $\widetilde{\Sigma}'$ and $L_{d\sigma}({\Gamma}_{a_{0}})< +\infty$. 
Taking a local complex coordinate $\zeta$ in a neighborhood of $p_{0}$ with $\zeta (p_{0})=0$, we can write the metric $d{\sigma}^{2}$ as
$$
d{\sigma}^{2}=|\zeta|^{2k\lambda/(1-\lambda)}v|d\zeta|^{2}
$$
with some positive smooth function $v$ and some real number $k$. If $\rho -{\alpha}_{j}$ has a zero of order $m(\geq m_{j}\geq 2)$ at $p_{0}$ for 
some $j\leq q-1$, then ${\rho}'_{z}$ has a zero of order $m-1$ at $p_{0}$ and $h_{z}(p_{0})\not= 0$. Then we have 
\begin{eqnarray}
k &=& -(m-1)+m\biggl{(}1-\dfrac{1}{m_{j}} \biggr{)}(1-\eta -\delta) \nonumber \\
  &=& 1-\dfrac{m}{m_{j}}-\dfrac{m}{m_{j}}(m_{j}-1)(\eta +\delta) \nonumber \\
  &\leq & -(\eta +\delta)\leq -{{\varepsilon}_{0}}\,. \nonumber
\end{eqnarray}
For the case where $\rho$ has a pole of order $m(\geq m_{q})$, ${\rho}'_{z}$ has a pole of order $m+1$, $h_{z}$ has a zero of order $m$ at $p_{0}$ 
and each component $\rho -{\alpha}_{j}$ in the right side of (\ref{eq2-6}) has a pole of order $m$ at $p_{0}$. 
Using the identity ${\eta}_{1}+\cdots +{\eta}_{q-1}=\gamma -{\eta}_{q}$ and (\ref{eq2-5}), we get 
\begin{eqnarray}
k &=& \dfrac{m}{\lambda} +(m+1)-m(\gamma -{\eta}_{q})(1-\eta -\delta) \nonumber \\
  &=& m{\eta}_{q}(1-\eta -\delta)-(m-1)\leq -{\varepsilon}_{0}\,. \nonumber
\end{eqnarray}
Moreover, for the case where ${\rho}_{z}'(p_{0})=0$ and ${\rho}(p_{0})\not= {\alpha}_{j}$ for all $j$, we see $k\leq -1$. 
Thus, in any case, $k\lambda /(1-\lambda)\leq -1$ by (\ref{eq2-5}) and there exists a positive constant $C'$ such that 
$$
d\sigma \geq C'\dfrac{|d\zeta|}{|\zeta|}
$$
in a neighborhood of $p_{0}$. Hence we have 
$$
R=\int_{{\Gamma}_{a_{0}}}d\sigma \geq C'\int_{{\Gamma}_{a_{0}}}\dfrac{|d\zeta|}{|\zeta|}= +\infty, 
$$
which contradicts (\ref{eq2-7}). 

On the other hand, since ${\Phi}^{\ast}d{\sigma}^{2}=|dz|^{2}$, we obtain by (\ref{eq2-6})
\begin{equation}\label{eq2-8}
|\omega|=\biggl{(}|{\rho}_{z}'|\prod_{j=1}^{q-1}\biggl{(}\dfrac{\sqrt{1+|{\alpha}_{j}|^{2}}}{|\rho -{\alpha}_{j}|} \biggr{)}^{{\eta}_{j}(1-\eta -\delta)} \biggr{)}^{\lambda}|dz|\,.
\end{equation}
By Lemma \ref{main-lem1}, we have 
\begin{eqnarray}
{\Phi}^{\ast}ds_{1,1} &=& \sqrt{1+|{\rho}|^{2}}|\omega| \nonumber \\
                      &=& \biggl{(}|{\rho}_{z}|(1+|\rho|^{2})^{1/2\lambda}\prod_{j=1}^{q-1}\biggl{(}\dfrac{\sqrt{1+|{\alpha}_{j}|^{2}}}{|\rho -{\alpha}_{j}|} \biggr{)}^{{\eta}_{j}(1-\eta -\delta)} \biggr{)}^{\lambda} |dz| \nonumber \\
                      &=& \biggl{(}\dfrac{|{\rho}_{z}'|}{1+|\rho|^{2}}\dfrac{1}{{\prod}_{j=1}^{q}|\rho , {\alpha}_{j}|^{{\eta}_{j}(1-\eta -\delta)}} \biggr{)}^{\lambda} |dz| \nonumber \\
                      &\leq & C^{\lambda}\biggl{(}\dfrac{R}{R^{2}-|z|^{2}} \biggr{)}^{\lambda}|dz|\,. \nonumber
\end{eqnarray}
Thus if we denote the distance $d(p)$ from a point $p\in \widetilde{\Sigma}$ to the boundary of $\widetilde{\Sigma}$ as the greatest lower bound of 
the lengths with respect to the metric $ds_{1,1}^{2}$ of all divergent paths in $\widetilde{\Sigma}$, then we have 
$$
d(p)\leq \int_{{\Gamma}_{a_{0}}}ds_{1,1}=\int_{L_{a_{0}}}{\Phi}^{\ast}ds_{1,1}\leq C^{\lambda}\int_{L_{a_{0}}}\biggl{(}\dfrac{R}{R^{2}-|z|^{2}} \biggr{)}^{\lambda}|dz|\leq C^{\lambda}\dfrac{R^{1-\lambda}}{1-\lambda}< +\infty
$$
because $0<\lambda <1$. However it contradicts the assumption that $ds_{1,1}^{2}$ is complete. 
\end{proof}

\section{Applications} 
This section is devoted to prove two applications of the main theorem.
\subsection{The Ahlfors Islands Theorem}
We recall the definition of an island of a meromorphic function on a Riemann surface. 

\begin{definition}
Let $\Sigma$ be a Riemann surface and $h\colon \Sigma\to \C\cup\{\infty\}$ a meromorphic function. 
Let $V\subset \C\cup\{\infty\}$ be a Jordan domain. A simply-connected component $U$ of $h^{-1}(V)$ with $\overline{U}\subset \Sigma$ is 
called an {\it island} of $h$ over $V$. Note that $h|_{U}\colon U\to V$ is a proper map. The degree of this map is called the {\it multiplicity} of 
the island $U$. An island of multiplicity one is called a {\it simple island}. 
\end{definition} 

Since the ratio $\rho$ of canonical forms of a flat front $f\colon \Sigma\to {\H}^{3}$ is a meromorphic function on $\widetilde{\Sigma}$, 
we can consider an island of $\rho$. When all islands of $\rho$ are small disks, we get the following result by applying Theorem \ref{main-thm}. 

\begin{corollary}\label{cor-4-1-1}
Let $f\colon \Sigma\to {\H}^{3}$ be a weakly complete flat front. Let $q\in \N$, ${\alpha}_{1}, \ldots, {\alpha}_{q}\in \C$ be distinct, 
$D({\alpha}_{j}, \varepsilon):=\{z\in \C : |z-{\alpha}_{j}|<\varepsilon \}$ $(1\leq j\leq q)$ be pairwise disjoint and $m_{1}, \ldots, m_{q}\in \N$. 
Suppose that 
\begin{equation}
\displaystyle \sum_{j=1}^{q} \biggl(1-\dfrac{1}{m_{j}} \biggr)> 3. 
\end{equation}
Then there exists $\varepsilon >0$ such that if the ratio of canonical forms of $f$ has no island of 
multiplicity less than $m_{j}$ over $D({\alpha}_{j}, \varepsilon)$ for all $j\in\{1, \ldots, q\}$ then $f$ must be congruent to a horosphere or 
a hyperbolic cylinder. 
\end{corollary}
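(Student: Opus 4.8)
The plan is to reduce the island hypothesis to the ramification hypothesis of Theorem \ref{main-thm} by choosing $\varepsilon$ small enough. First I would argue by contradiction: suppose that for every $\varepsilon>0$ there is a weakly complete flat front, not congruent to a horosphere or hyperbolic cylinder, whose ratio $\rho$ of canonical forms has no island of multiplicity less than $m_j$ over $D(\alpha_j,\varepsilon)$ for all $j$. By Fact \ref{main-fact1} such a $\rho$ is nonconstant. The key point is to relate ``$\rho$ has no low-multiplicity island over $D(\alpha_j,\varepsilon)$'' to ``all $\alpha_j$-points of $\rho$ have multiplicity at least $m_j$''. If some $\alpha_j$-point $p$ of $\rho$ had multiplicity $k<m_j$, then on a sufficiently small neighborhood of $p$ (depending on $p$, but here that is harmless) the map $\rho$ is a $k$-to-$1$ branched cover onto a small disk $D(\alpha_j,\varepsilon)$ for $\varepsilon$ small, and the component of $\rho^{-1}(D(\alpha_j,\varepsilon))$ containing $p$ would be a simply-connected island over $D(\alpha_j,\varepsilon)$ with compact closure and multiplicity $k<m_j$ — provided that this component is relatively compact in $\widetilde{\Sigma}$. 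Once this is established, the ramification hypothesis of Theorem \ref{main-thm} holds, and since $\gamma=\sum_j(1-1/m_j)>3$, that theorem forces $f$ to be congruent to a horosphere or a hyperbolic cylinder, a contradiction.

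The main obstacle, and the reason a uniform $\varepsilon$ is needed rather than a pointwise argument, is the relative-compactness requirement ``$\overline{U}\subset\widetilde{\Sigma}$'' in the definition of an island: a priori the component of $\rho^{-1}(D(\alpha_j,\varepsilon))$ through an $\alpha_j$-point could escape to the ideal boundary of $\widetilde{\Sigma}$ and fail to be an island. To handle this I would invoke the quantitative estimate already proved inside Theorem \ref{main-thm}, or rather the function-theoretic Lemma \ref{main-lem1} together with weak completeness, to get a lower bound on the $ds^2_{1,1}$-distance one must travel before $\rho$ can return near $\alpha_j$ with low multiplicity; equivalently one argues that if no genuine (relatively compact) island of multiplicity $<m_j$ over $D(\alpha_j,\varepsilon)$ exists, then likewise no $\alpha_j$-point of multiplicity $<m_j$ exists, because any such point generates, for $\varepsilon$ below a threshold determined by $L=\min_{i<j}|\alpha_i,\alpha_j|$ and the $m_j$, a relatively compact component. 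The cleanest route is a normal-families / Ahlfors-type argument: choosing $\varepsilon$ uniformly small relative to the separation of the $\alpha_j$ and to a lower bound on $|\rho,\alpha_j|$ away from the islands, a non-relatively-compact component would carry an infinite $ds^2_{1,1}$-length divergent curve while the branching forces the same contradiction with Lemma \ref{main-lem1} that drives the proof of Theorem \ref{main-thm}; hence such components cannot occur and every low-multiplicity $\alpha_j$-point would in fact yield an island, which is excluded.

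Concretely, the steps in order are: (1) assume the conclusion fails for all $\varepsilon$, extract via Fact \ref{main-fact1} that $\rho$ is nonconstant; (2) fix the combinatorial data $q$, $\alpha_j$, $m_j$ and note $\gamma>3$; (3) choose $\varepsilon_0>0$ so small that the disks $D(\alpha_j,\varepsilon_0)$ are pairwise disjoint and, for every $\alpha_j$-point $p$ of any such $\rho$ of multiplicity $k$, the connected component of $\rho^{-1}(D(\alpha_j,\varepsilon))$ containing $p$ is simply connected, maps properly onto $D(\alpha_j,\varepsilon)$ with degree exactly $k$, and is relatively compact in $\widetilde{\Sigma}$ for all $\varepsilon\le\varepsilon_0$ — the last property being the substantive point, secured by the length estimate above; (4) conclude that if $\rho$ had an $\alpha_j$-point of multiplicity less than $m_j$ it would have an island of multiplicity less than $m_j$ over $D(\alpha_j,\varepsilon_0)$, contradicting the hypothesis, so all $\alpha_j$-points of $\rho$ have multiplicity at least $m_j$; (5) apply Theorem \ref{main-thm} to obtain that $f$ is congruent to a horosphere or hyperbolic cylinder, contradicting the choice in step (1). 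I expect step (3), the uniform relative-compactness of the islands, to be the only place requiring real work; everything else is a direct packaging of Theorem \ref{main-thm} and Fact \ref{main-fact1}.
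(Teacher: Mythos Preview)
Your overall strategy is the paper's: argue by contradiction and reduce the island hypothesis to the ramification hypothesis of Theorem~\ref{main-thm}. The paper's proof is essentially two lines: if no $\varepsilon$ works, then $\rho$ is nonconstant and has no island of multiplicity $<m_j$ over $D(\alpha_j,\varepsilon)$ for \emph{any} $\varepsilon>0$; hence every $\alpha_j$-point of $\rho$ has multiplicity $\ge m_j$, contradicting Theorem~\ref{main-thm}.

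Where you diverge is in the weight you place on step~(3). First, a small correction to your negation: the statement fixes $f$ at the outset, so the contradiction hypothesis is that \emph{for this fixed $f$} the island condition holds for every $\varepsilon>0$ while $f$ is not a horosphere or hyperbolic cylinder. You do not get a different front for each $\varepsilon$ (the paper's phrase ``we can find a weakly complete flat front'' is a bit loose on this point, but the argument is for the given $f$). Once $f$ is fixed, relative compactness is a purely local, elementary matter and requires none of the machinery you propose. If $p\in\widetilde{\Sigma}$ is an $\alpha_j$-point of $\rho$ of multiplicity $k$, pick a coordinate disk $W$ around $p$ with $\overline{W}\subset\widetilde{\Sigma}$ containing no other $\alpha_j$-point; by the local normal form of a holomorphic map, for all sufficiently small $\varepsilon>0$ (depending on $p$) the connected component of $\rho^{-1}(D(\alpha_j,\varepsilon))$ through $p$ lies in $W$, is simply connected, has compact closure, and is mapped onto $D(\alpha_j,\varepsilon)$ with degree exactly $k$. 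Since the contradiction hypothesis rules out low-multiplicity islands for \emph{every} $\varepsilon$, you are free to shrink $\varepsilon$ pointwise; no uniform threshold, no appeal to Lemma~\ref{main-lem1}, and no use of weak completeness is needed at this step. With that observation your outline collapses to the paper's short argument.
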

\begin{proof}
If such an $\varepsilon$ does not exist, for any $\varepsilon$, we can find a weakly complete flat front whose the ratio of canonical forms $\rho$ is
nonconstant and has no island of multiplicity less than $m_{j}$ over $D({\alpha}_{j}, \varepsilon)$. However this implies that all ${\alpha}_{j}$-points 
of $\rho$ have multiplicity at least $m_{j}$, contradicting Theorem \ref{main-thm}.
\end{proof}

The important special case of Corollary \ref{cor-4-1-1} is the case where $q=7$ and $m_{j}=2$ for all $j$. This corresponds to a weak version of 
the so-called Five Islands Theorem in the Ahlfors theory of covering surfaces (\cite{Ahl}, \cite[Chapter XIII]{Ne}). 

\begin{corollary}\label{cor-4-1-2}
Let $f\colon \Sigma\to {\H}^{3}$ be a weakly complete flat front. Let ${\alpha}_{1}, \ldots, {\alpha}_{7}\in \C$ be distinct and 
$D({\alpha}_{j}, \varepsilon):=\{z\in \C : |z-{\alpha}_{j}|<\varepsilon \}$ $(1\leq j\leq 7)$. 
Then there exists $\varepsilon >0$ such that if the ratio of canonical forms $\rho$ of $f$ has no simple island of over any of the 
small disks $D({\alpha}_{j}, \varepsilon)$ then $f$ must be congruent to a horosphere or a hyperbolic cylinder. 
\end{corollary}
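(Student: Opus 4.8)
The plan is to deduce Corollary \ref{cor-4-1-2} as the special case $q=7$, $m_{1}=\cdots=m_{7}=2$ of Corollary \ref{cor-4-1-1}. First I would check that the hypotheses of Corollary \ref{cor-4-1-1} are met in this situation: the ramification sum is
\[
\sum_{j=1}^{7}\Bigl(1-\frac{1}{m_{j}}\Bigr)=7\cdot\frac{1}{2}=\frac{7}{2}>3,
\]
so the quantitative condition \eqref{eq2-1} holds. Since the seven points ${\alpha}_{1},\ldots,{\alpha}_{7}\in\C$ are distinct, I may fix at the outset a radius ${\varepsilon}_{1}>0$ small enough that the disks $D({\alpha}_{j},{\varepsilon}_{1})$ are pairwise disjoint; then for every $\varepsilon\le{\varepsilon}_{1}$ the disks $D({\alpha}_{j},\varepsilon)$ remain pairwise disjoint, which is exactly the separation requirement in Corollary \ref{cor-4-1-1}.

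Next I would apply Corollary \ref{cor-4-1-1} with these data. It furnishes an $\varepsilon>0$ (which we may further shrink so that $\varepsilon\le{\varepsilon}_{1}$, keeping the disks disjoint) with the property: if the ratio of canonical forms $\rho$ of a weakly complete flat front $f$ has no island of multiplicity less than $m_{j}=2$ over $D({\alpha}_{j},\varepsilon)$ for every $j$, then $f$ is congruent to a horosphere or a hyperbolic cylinder. The only point to observe is the translation of terminology: an island of multiplicity less than $2$ is precisely an island of multiplicity one, i.e.\ a \emph{simple} island in the sense of the Definition preceding Corollary \ref{cor-4-1-1}. Hence the hypothesis "$\rho$ has no simple island over any of $D({\alpha}_{j},\varepsilon)$" is literally the hypothesis "$\rho$ has no island of multiplicity less than $m_{j}$ over $D({\alpha}_{j},\varepsilon)$ for all $j$" in the case at hand, and the conclusion follows at once.

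Since the argument is purely a specialization of an already-established corollary, there is no genuine obstacle; the only things requiring a word of care are the bookkeeping of the radius (ensuring the chosen $\varepsilon$ simultaneously makes the disks disjoint and triggers Corollary \ref{cor-4-1-1}, which is handled by taking a minimum) and the observation that "no simple island" and "no island of multiplicity $<2$" express the same condition. I would therefore write the proof as a short paragraph invoking Corollary \ref{cor-4-1-1} with $q=7$ and $m_{j}\equiv2$, noting $\sum_{j}(1-1/m_{j})=7/2>3$, and remarking that simple islands are exactly the islands of multiplicity less than $2$.

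\begin{proof}
Apply Corollary \ref{cor-4-1-1} with $q=7$ and $m_{1}=\cdots=m_{7}=2$. Since the ${\alpha}_{j}$ are distinct, we may first choose ${\varepsilon}_{1}>0$ so that the disks $D({\alpha}_{j},{\varepsilon}_{1})$ $(1\le j\le 7)$ are pairwise disjoint; for any $0<\varepsilon\le{\varepsilon}_{1}$ the disks $D({\alpha}_{j},\varepsilon)$ are then pairwise disjoint as well. Moreover
\[
\sum_{j=1}^{7}\Bigl(1-\dfrac{1}{m_{j}}\Bigr)=7\cdot\dfrac{1}{2}=\dfrac{7}{2}>3,
\]
so the hypothesis of Corollary \ref{cor-4-1-1} is satisfied. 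Hence there exists $\varepsilon>0$, which we may take to be at most ${\varepsilon}_{1}$, such that if the ratio of canonical forms $\rho$ of $f$ has no island of multiplicity less than $m_{j}=2$ over $D({\alpha}_{j},\varepsilon)$ for all $j\in\{1,\ldots,7\}$, then $f$ is congruent to a horosphere or a hyperbolic cylinder. By definition, an island of multiplicity less than $2$ is an island of multiplicity one, that is, a simple island. Therefore, if $\rho$ has no simple island over any of the disks $D({\alpha}_{j},\varepsilon)$, then $f$ must be congruent to a horosphere or a hyperbolic cylinder.
\end{proof}
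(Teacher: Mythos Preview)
Your proposal is correct and matches the paper's approach exactly: the paper presents Corollary~\ref{cor-4-1-2} simply as the special case $q=7$, $m_{1}=\cdots=m_{7}=2$ of Corollary~\ref{cor-4-1-1}, without even writing out a separate proof. Your added care about choosing $\varepsilon$ small enough to keep the disks disjoint and your explicit identification of ``simple island'' with ``island of multiplicity less than $2$'' are welcome clarifications but do not depart from the paper's reasoning.
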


\subsection{The classification of complete flat surfaces in ${\H}^{3}$} 
As another application of Theorem \ref{main-thm}, we obtain the best possible upper bound for the number of exceptional values of the ratio of 
canonical forms of a weakly complete flat front in ${\H}^{3}$. 

\begin{corollary}[{\cite[Theorem 4.5]{KN}}]\label{cor4-2-1}
Let $f$ be a weakly complete flat front in ${\H}^{3}$. If the ratio of canonical forms $\rho$ of $f$ omits more than three values, 
then $f$ must be congruent to a horosphere or a hyperbolic cylinder. 
\end{corollary}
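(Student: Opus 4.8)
The goal is to deduce Corollary \ref{cor4-2-1} from Theorem \ref{main-thm}. The plan is as follows: suppose $f$ is a weakly complete flat front whose ratio of canonical forms $\rho$ omits more than three values, i.e.\ there exist distinct ${\alpha}_1,\ldots,{\alpha}_q\in\C\cup\{\infty\}$ with $q\geq 4$ such that $\rho$ takes none of these values on $\widetilde{\Sigma}$. I would first observe that ``$\rho$ omits the value ${\alpha}_j$'' means $\rho$ has no ${\alpha}_j$-point at all, so the ramification hypothesis ``all ${\alpha}_j$-points of $\rho$ have multiplicity at least $m_j$'' is satisfied \emph{vacuously} for any choice of positive integers $m_j$.

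The key step is then to choose the multiplicities $m_j$ so that the ramification index $\gamma=\sum_{j=1}^{q}(1-1/m_j)$ exceeds $3$. Since $q\geq 4$, taking $m_j$ sufficiently large (for instance, it suffices to pick $m_1=\cdots=m_q=m$ with $q(1-1/m)>3$, which holds for all large $m$ once $q\geq 4$ because $q(1-1/m)\to q\geq 4>3$) yields $\gamma>3$. With this choice, all hypotheses of Theorem \ref{main-thm} are in force, so Theorem \ref{main-thm} applies and forces $f$ to be congruent to a horosphere or a hyperbolic cylinder. This completes the argument.

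There is really no serious obstacle here; the only point to be a little careful about is the boundary case where $q=4$ and the $m_j$ must genuinely be chosen large (one cannot take $m_j=2$, since $4\cdot\frac12=2\not>3$), and also the case where one of the omitted values is $\infty$, but Theorem \ref{main-thm} already allows ${\alpha}_j\in\C\cup\{\infty\}$ so this causes no difficulty. I would also remark, for context, that the bound ``three'' is sharp: Proposition \ref{prop-Voss} exhibits, for every $E$ with $|E|=q\leq 3$, a weakly complete flat front whose ratio of canonical forms omits exactly $E$ and which is neither a horosphere nor a hyperbolic cylinder, so the number of exceptional values of $\rho$ cannot in general be reduced below $3$.

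Thus Corollary \ref{cor4-2-1} is an immediate specialization of Theorem \ref{main-thm} to the extreme ramified case $m_j=\infty$ (morally), realized by taking the $m_j$ large and finite, and the proof is short:
\begin{proof}
Suppose $\rho$ omits $q\geq 4$ distinct values ${\alpha}_1,\ldots,{\alpha}_q\in\C\cup\{\infty\}$. Choose an integer $m\in\N$ large enough that $q(1-1/m)>3$, which is possible since $q\geq 4$, and set $m_1=\cdots=m_q=m$. Then $\gamma=\sum_{j=1}^{q}(1-1/m_j)=q(1-1/m)>3$. Since $\rho$ has no ${\alpha}_j$-point for any $j$, the condition that every ${\alpha}_j$-point of $\rho$ have multiplicity at least $m_j$ holds vacuously. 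Hence Theorem \ref{main-thm} applies and $f$ is congruent to a horosphere or a hyperbolic cylinder.
\end{proof}
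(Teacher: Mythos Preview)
Your proof is correct and follows essentially the same approach as the paper's: both deduce the corollary directly from Theorem \ref{main-thm} by observing that an omitted value vacuously satisfies any ramification hypothesis. The only cosmetic difference is that the paper informally sets $m_{j}=\infty$ in (\ref{eq2-1}), whereas you choose large finite $m_{j}$ to make $\gamma>3$ hold rigorously.
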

\begin{proof}
In Theorem \ref{main-thm}, if $\rho$ does not take a value ${\alpha}_{j}$, then we may set $m_{j}=\infty$ in (\ref{eq2-1}), and 
if $\rho$ omits all values ${\alpha}_{j}$ $(1\leq j\leq q)$, (\ref{eq2-1}) means $q>3$, which is the case of this result. 
\end{proof}

The number ``three'' is sharp because there exist examples in Proposition \ref{prop-Voss}. As an application of this corollary, we give a simple proof 
of the classification of complete nonsingular flat surfaces in ${\H}^{3}$.  

\begin{corollary}[\cite{Sa}, \cite{VV}]\label{cor4-2-2}
Any complete flat surface in ${\H}^{3}$ must be congruent to a horosphere or a hyperbolic cylinder. 
\end{corollary}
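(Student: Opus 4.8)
The plan is to reduce this classical statement to Corollary \ref{cor4-2-1} (equivalently, to Fact \ref{main-fact1} and Theorem \ref{main-thm}) by two observations: that a complete nonsingular flat surface in $\H^3$ is in particular a weakly complete flat front, and that its ratio $\rho$ of canonical forms omits the unit circle $\{|\rho|=1\}$, hence omits infinitely many values.

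First I would set up the front structure. A complete immersed flat surface $f\colon\Sigma\to\H^3$ has no singular points, so the hypotheses of Section 1 apply with $L_f=(f,n)$ a genuine Legendrian immersion; thus $f$ is automatically a flat front, and one has the holomorphic Legendrian lift $\E_f\colon\widetilde\Sigma\to SL(2,\C)$, canonical forms $\omega,\theta$, and $\rho=\theta/\omega$. The key point is that completeness of the first fundamental form $ds^2=|\omega+\bar\theta|^2$ forces completeness of $ds^2_{1,1}=(1+|\rho|^2)|\omega|^2$: indeed, by \eqref{Legen-form2} and \eqref{eq-Sasakian}, at every point $ds^2\le 2ds^2_{1,1}$ as quadratic forms (since $|\omega+\bar\theta|^2\le 2(|\omega|^2+|\theta|^2)$ by the elementary inequality $|a+b|^2\le 2(|a|^2+|b|^2)$), so any $ds^2_{1,1}$-divergent path of finite $ds^2_{1,1}$-length would have finite $ds^2$-length, contradicting completeness of $ds^2$. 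Hence $f$ is weakly complete.

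Next I would control the image of $\rho$. Since $f$ has no singular points, \cite{KRSUY} (recalled after \eqref{sign-rho}) gives $|\rho(p)|\ne 1$ for every $p\in\Sigma$, hence for every point of $\widetilde\Sigma$; therefore $\rho$ omits the entire unit circle $\{\zeta\in\C:|\zeta|=1\}$, which is an infinite set. In particular $\rho$ omits more than three values of $\C\cup\{\infty\}$. Applying Corollary \ref{cor4-2-1} to the weakly complete flat front $f$, we conclude that $f$ is congruent to a horosphere or a hyperbolic cylinder.

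The only genuine content beyond bookkeeping is the implication ``complete $ds^2$ $\Rightarrow$ complete $ds^2_{1,1}$''; everything else is an invocation of results already in the excerpt. That implication is not hard — it is the pointwise comparison $ds^2\le 2\,ds^2_{1,1}$ — but it is the step one must not skip, because weak completeness (not completeness of $ds^2$ itself) is the hypothesis of Theorem \ref{main-thm} and Corollary \ref{cor4-2-1}. One should also note that nonsingularity is used twice: once to guarantee $f$ is a front with $L_f$ an immersion, and once to force $|\rho|\ne 1$ everywhere.
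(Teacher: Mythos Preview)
Your proof is correct and follows the same route as the paper: nonsingularity forces $|\rho|\ne 1$ everywhere, so $\rho$ omits more than three values, and then Corollary~\ref{cor4-2-1} applies once weak completeness is established. The one difference is in that last step: the paper cites \cite[Corollary~3.4]{KUY2} for the implication ``complete $\Rightarrow$ weakly complete,'' whereas you supply it directly via the pointwise inequality $ds^{2}=|\omega+\bar\theta|^{2}\le 2(|\omega|^{2}+|\theta|^{2})=2\,ds^{2}_{1,1}$, which is a clean self-contained substitute for the citation.
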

\begin{proof}
Because a flat surface has no singularities, the complement of the image of $\rho$ contains at least the set $\{|\rho|=1\}\subset \C\cup\{\infty\}$. 
On the other hand, Kokubu, Umehara and Yamada \cite[Corollary 3.4]{KUY2} proved that a complete flat surface in ${\H}^{3}$ is also 
weakly complete. Therefore, by Corollary \ref{cor4-2-1}, it must be congruent to a horosphere or a hyperbolic cylinder.
\end{proof}


\end{document}